\begin{document}



\flushbottom
\newcommand{\bt}{\beta}
\newcommand{\al}{\alpha}
\newcommand{\laa}{\lambda_\alpha}
\newcommand{\lab}{\lambda_\beta}
\newcommand{\no}{|\Omega|}
\newcommand{\nd}{|D|}
\newcommand{\la}{\lambda}
\newcommand{\ro}{\varrho}
\newcommand{\cd}{\chi_{D}}
\newcommand{\cdc}{\chi_{D^c}}
\newcommand{\be}{\begin{equation}}
\newcommand{\ee}{\end{equation}}
\newcommand{\Om}{\Omega}
\newcommand{\h}{H^1_0(\Omega)}
\newcommand{\lt}{L^2(\Omega)}
\newtheorem{thm}{Theorem}[section]
\newtheorem{cor}[thm]{Corollary}
\newtheorem{lem}[thm]{Lemma}
\newtheorem{prop}[thm]{Proposition}
\theoremstyle{definition}
\newtheorem{defn}{Definition}[section]
\theoremstyle{remark}
\newtheorem{rem}{Remark}[section]
\numberwithin{equation}{section}
\renewcommand{\theequation}{\thesection.\arabic{equation}}
\numberwithin{equation}{section}

\title{Optimal ground state energy of  two-phase conductors
}
\author{\bf Abbasali Mohammadi \\
\small  Department of Mathematics, College of Sciences,\\
\small Yasouj University, Yasouj, Iran, 75918-74934\\
\small mohammadi@yu.ac.ir\\
\bf Mohsen Yousefnezhad \\
\small  Department of Mathematical Sciences,\\
\small  Sharif University of Technology, Tehran, Iran\\
\small yousefnezhad@mehr.sharif.ir
}

\maketitle \hrule
\textbf{Abstract.} Consider the problem of distributing two conducting materials in a
ball with fixed proportion in order to minimize the first eigenvalue
of a Dirichlet operator. It was conjectured that the optimal
distribution consists of putting the material with the highest conductivity
in a ball around the center. In this paper, we show that the conjecture
is false  for all dimensions $n \geq 2$.\\\\
\small{{\it Key Words:}  Eigenvalue optimization; Two-phase conductors; Rearrangements; Bessel functions}
\footnote{{\it Ams subject classifications :} Primary,  49Q10, 35Q93; Secondary,  35P15, 33C10 }
\hrule

\section{Introduction}\label{intro}
 $\quad$  Let $\Omega$ be a bounded domain in $\mathbb{R}^n$
with a smooth boundary which is to be called the design region and
consider two conducting materials with conductivities  $0<\alpha< \beta$.  These materials
are distributed in $\Om$ such that the volume of the region $D$
occupied by the material with conductivity $\beta$ is a fixed number $A$ with $0<A<|\Om|$.
 Consider the
following two-phase eigenvalue problem
\begin{align} \label{mainpde}
-\mathrm{div} \left(  (\beta\chi_D+\alpha\chi_{D^c})  \nabla u  \right)=\lambda u \quad
\mathrm{in}\:\Omega,\qquad \quad u=0 \quad \mathrm{on}\:\partial
\Omega,
\end{align}
where $\beta\chi_D+\alpha\chi_{D^c}$ is the conductivity,
  $\la$ is the ground state energy or the
smallest positive eigenvalue and $u$ is the corresponding eigenfunction.

 We use notation $\la(D)$ to show the dependence of the eigenvalue on $D$, the
 region with the highest conductivity.
 To determine
the system's profile which gives  the minimum principal
eigenvalue, we should verify the following optimization problem
\begin{equation}\label{mainopt}
\underset{D\subset \Om,\:|D|= A} {\inf}\lambda(D),
\end{equation}
where $\la$ has the following variational formulation
\begin{equation}\label{lambdavar}
\lambda(D)=\underset{u\in H^1_0(\Om),\: \|u\|_{\lt}=1} {\min}\int_{\Om} ( \beta\chi_D+\alpha\chi_{D^c})  |\nabla u|^2 dx.
\end{equation}

 In general, this problem has no solution in any class of usual domains.
 Cox and Lipton have given in \cite{coxlipton} conditions for an optimal
microstructural design. However, when $\Om$ is a ball, the symmetry of the domain implies
that there exists a radially symmetric minimizer.
 Alvino \emph{et al} have obtained this result thanks to a comparison
result for Hamilton-Jacobi equations \cite{Alvino}. Conca \emph{et al}.
 have revived interest in this problem by giving a new simpler proof
 of the existence result only using rearrangement techniques
 \cite{Conca1}.


  In eigenvalue optimization for elliptic partial differential equations, one of challenging mathematical problems after the problem of existence is an exact formula of the optimizer or optimal shape design. Most papers in this field answered this question just in case $\Om$ is a ball  \cite{chanillo, emamipro, emamielec-1, abbasali,abbasali2}.   This class of problems is difficult to solve due to the lack of the topology information of the optimal shape.
  For one-dimensional case, Krein has shown in
 \cite{Kerien} that the unique minimizer of \eqref{mainopt} is obtained by putting the material with the highest conductivity in an interval in the middle of the domain.
    Surprisingly, the exact distribution of the two materials which solves optimization problem \eqref{mainopt}  is still not known for higher dimensions.

   Let $\Om=\mathcal{B}(0,\mathcal{R})$ be a ball centered at the origin with radius $\mathcal{R}$ , the solution of the one-dimensional problem suggests for higher dimensions  that $\mathcal{B}(0,\mathcal{R}^*)$ is a  natural candidate to be the optimal domain.
 This conjecture has been supported by numerical evidence  in \cite{Conca2} using the shape derivative analysis of the first eigenvalue for the two-phase conduction problem. In addition, it
 has been shown in \cite{Ketab} employing the second order shape derivative
 calculus that $D=\mathcal{B}(0,\mathcal{R}^*)$ is a local strict minimum for the
 optimization problem \eqref{mainopt} when $A$ is small enough. In spite of the
 above evidence, it has been established in \cite{Conca3} that the conjecture is not
 true  in two- or three- dimensional spaces when $\al$ and
 $\bt$ are close to each other (low contrast regime) and $A$ is
 sufficiently large.  The theoretical base for the result is an asymptotic
  expansion of the eigenvalue with respect  to $\beta- \alpha$ as $\beta\rightarrow
  \alpha$, which allows one to approximate the optimization problem by
  a simple  minimization problem.

  In this paper, we investigate the conjecture for all dimensions $n\geq 2$.
 We prove that the conjecture is false not only for  two- or three- dimensional spaces, but also for all  dimensions  $n\geq 2$.  We have provided a different proof of the main result in \cite{Conca3}     and we will establish it  in a vastly simpler way.


\section{Preliminaries }\label{prel}

 $\quad$ In order to establish the main theorem, we need some preparation. Our proof is based upon the properties of Bessel functions. In this
section, we state some results from the theory of Bessel functions. The reader can refer to \cite{Bowman, Watson} for further
information about Bessel functions.

 Consider the standard form of  Bessel equation which is given by
 \begin{equation}\label{bessel eq}
 x^2y^{\prime\prime} +xy^{\prime}+(x^2-\nu^2)y=0,
  \end{equation}
 where $\nu$ is a nonnegative real number. The regular solution of \eqref{bessel eq}, called the Bessel function of the first kind  of order $\nu$, is given by
  \begin{equation}\label{besselseri}
J_\nu(x)=\sum_{k=0}^\infty \frac{(-1)^k x^{2k+\nu}}{2^{2k+\nu} \Gamma (\nu+k+1)},
  \end{equation}
  where $\Gamma$ is the gamma function. We shall use following recurrence relations between Bessel functions
   \begin{equation}\label{besselrec1}
J_{\nu-1}(x)+J_{\nu+1}(x)= \frac{2\nu}{x} J_\nu(x),
  \end{equation}
   \begin{equation}\label{besselrec2}
(x^{-\nu}J_{\nu}(x))^\prime=-x^{-\nu}J_{\nu+1}(x).
  \end{equation}

   Let $j_{\nu,m}$ be the $m$th positive zeros of the function $J_{\nu}(x)$, then it is well known that the zeros of $J_{\nu}(x)$ are simple with possible
  exception of $x=0$. In addition, we have the following lemma related to the roots of $J_{\nu}(x)$, \cite{Bowman, Watson}.
  \begin{lem}\label{besselroot}
   When $\nu\geq 0$, the positive roots of $J_{\nu}(x)$ and $J_{\nu+1}(x)$ interlace according to the inequalities
   \begin{equation*}
j_{\nu,m}< j_{{\nu+1},m}<j_{\nu,{m+1}}.
  \end{equation*}
     \end{lem}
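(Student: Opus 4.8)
The plan is to derive the interlacing from Rolle's theorem applied to two auxiliary functions built from $J_\nu$ and $J_{\nu+1}$, using the recurrence relations to identify their derivatives. First I would consider $f(x)=x^{-\nu}J_\nu(x)$, whose positive zeros are exactly $j_{\nu,1}<j_{\nu,2}<\cdots$ since $x^{-\nu}\neq 0$ for $x>0$. By \eqref{besselrec2} its derivative is $f'(x)=-x^{-\nu}J_{\nu+1}(x)$, whose positive zeros are precisely $j_{\nu+1,1}<j_{\nu+1,2}<\cdots$. Applying Rolle's theorem to $f$ on each interval $[j_{\nu,m},j_{\nu,m+1}]$ then produces at least one zero of $J_{\nu+1}$ strictly between consecutive zeros of $J_\nu$.

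Next I would set up the companion relation. Combining \eqref{besselrec1} and \eqref{besselrec2} yields $(x^{\nu}J_{\nu}(x))'=x^{\nu}J_{\nu-1}(x)$, and after shifting the index, $(x^{\nu+1}J_{\nu+1}(x))'=x^{\nu+1}J_\nu(x)$. The function $g(x)=x^{\nu+1}J_{\nu+1}(x)$ vanishes at $x=0$ and at each $j_{\nu+1,m}$, while $g'(x)=x^{\nu+1}J_\nu(x)$ vanishes exactly at the $j_{\nu,m}$. Applying Rolle's theorem to $g$ on $[0,j_{\nu+1,1}]$ and on each $[j_{\nu+1,m},j_{\nu+1,m+1}]$ gives a zero of $J_\nu$ in $(0,j_{\nu+1,1})$ and at least one zero of $J_\nu$ strictly between consecutive zeros of $J_{\nu+1}$.

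Finally I would combine these two facts with a counting argument to pin down the exact ordering. The series \eqref{besselseri} shows $J_\nu(x),J_{\nu+1}(x)>0$ for small $x>0$, so the smallest positive zeros are well defined, and from the statement on $(0,j_{\nu+1,1})$ above we get $j_{\nu,1}<j_{\nu+1,1}$. The two Rolle facts say that every open gap between consecutive $j_{\nu,m}$ contains a zero of $J_{\nu+1}$ and every open gap between consecutive $j_{\nu+1,m}$ contains a zero of $J_\nu$; were any such gap to contain two zeros of the other family, the complementary fact would force an extra zero of the first family inside that gap, contradicting that its endpoints are consecutive. Hence each gap contains exactly one zero of the other family, and together with $j_{\nu,1}<j_{\nu+1,1}$ this forces the alternating pattern $j_{\nu,m}<j_{\nu+1,m}<j_{\nu,m+1}$ for all $m$, which I would formalize by induction on $m$. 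I expect the main obstacle to be precisely this last step: Rolle's theorem delivers interlacing zeros almost immediately, but showing that the indices align as claimed, rather than, say, $j_{\nu+1,m}$ landing between $j_{\nu,m+1}$ and $j_{\nu,m+2}$, requires the careful bookkeeping above, anchored by the small-$x$ behavior of the series.
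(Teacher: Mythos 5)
The paper does not actually prove this lemma: it is quoted as a classical fact with a pointer to Bowman and Watson, so there is no in-paper argument to compare against. Your proposal supplies the standard textbook proof (essentially Watson's), and it is correct: the two Rolle applications, to $x^{-\nu}J_\nu$ via \eqref{besselrec2} and to $x^{\nu+1}J_{\nu+1}$ via the companion identity $(x^{\nu+1}J_{\nu+1}(x))'=x^{\nu+1}J_\nu(x)$ (which does follow from \eqref{besselrec1} and \eqref{besselrec2} as you claim), give the two "at least one zero in each gap" statements, the interval $(0,j_{\nu+1,1})$ anchors the indexing, and your exclusion argument upgrades "at least one" to "exactly one," after which the induction is routine. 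The one point you pass over silently is that the counting step needs $J_\nu$ and $J_{\nu+1}$ to have no common positive zero — otherwise a zero of one family sitting at an endpoint of a gap of the other would spoil the strict inequalities. This is immediate from $J_\nu'(x)=\tfrac{\nu}{x}J_\nu(x)-J_{\nu+1}(x)$ (a rearrangement of \eqref{besselrec2}) together with the simplicity of the positive zeros, which the paper records just before the lemma; adding that one line would make your argument fully self-contained, and in exchange you get a proof where the paper offers only a citation.
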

      We will need the following technical assertion later.
      \begin{lem}\label{goodrel}
      If $\nu_1,\nu_2\geq0$, then
      \begin{equation*}
(\nu_2^2-\nu_1^2)\int_0^\tau \frac{J_{\nu_2}(s)J_{\nu_1}(s)}{s}\;ds= \tau (J^\prime_{\nu_2}(\tau)J_{\nu_1}(\tau)-J_{\nu_2}(\tau)J^{\prime}_{\nu_1}(\tau)).
  \end{equation*}
      \end{lem}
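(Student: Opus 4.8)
The plan is to recognize this as a Lommel-type cross-product integral and to derive it from the self-adjoint form of Bessel's equation \eqref{bessel eq} by a Lagrange-identity (integration by parts) argument. Throughout I would abbreviate $y_1(s)=J_{\nu_1}(s)$ and $y_2(s)=J_{\nu_2}(s)$, and I would treat the case $\nu_1=\nu_2$ separately, where both sides vanish identically and the identity is trivial; so the substance is the case $\nu_1\neq\nu_2$.

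First I would rewrite Bessel's equation \eqref{bessel eq} in Sturm--Liouville form. Dividing \eqref{bessel eq} by $s$ and using $sy''+y'=(sy')'$, each $y_i$ satisfies
\[
(sy_i')' + \left(s-\frac{\nu_i^2}{s}\right)y_i = 0 .
\]
Then I would multiply the equation for $y_2$ by $y_1$, the equation for $y_1$ by $y_2$, and subtract. The first-order terms cancel in the combination $(sy_2')'y_1-(sy_1')'y_2$, which is exactly the derivative of $s\bigl(y_2'y_1-y_2y_1'\bigr)$, while the zeroth-order terms collapse to $\tfrac{\nu_2^2-\nu_1^2}{s}\,y_1y_2$. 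This yields the key pointwise identity
\[
\frac{d}{ds}\Bigl[s\bigl(J_{\nu_2}'(s)J_{\nu_1}(s)-J_{\nu_2}(s)J_{\nu_1}'(s)\bigr)\Bigr] = \frac{\nu_2^2-\nu_1^2}{s}\,J_{\nu_2}(s)J_{\nu_1}(s).
\]

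Next I would integrate this over $[0,\tau]$. The right-hand side integrates to $(\nu_2^2-\nu_1^2)\int_0^\tau \frac{J_{\nu_2}(s)J_{\nu_1}(s)}{s}\,ds$, and the left-hand side produces the boundary values of $s\bigl(J_{\nu_2}'J_{\nu_1}-J_{\nu_2}J_{\nu_1}'\bigr)$ at $\tau$ and at $0$. The only delicate point, and the main obstacle, is the behaviour at the lower endpoint $s=0$. Here I would invoke the series \eqref{besselseri}, which gives the leading asymptotics $J_{\nu_i}(s)=c_i s^{\nu_i}\bigl(1+O(s^2)\bigr)$ and $J_{\nu_i}'(s)=c_i\nu_i s^{\nu_i-1}\bigl(1+O(s^2)\bigr)$ as $s\to 0^+$. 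Consequently both products $sJ_{\nu_2}'J_{\nu_1}$ and $sJ_{\nu_2}J_{\nu_1}'$ are $O(s^{\nu_1+\nu_2})$ and vanish in the limit, while the same estimate shows the integrand $J_{\nu_2}J_{\nu_1}/s=O(s^{\nu_1+\nu_2-1})$ is integrable near the origin, so the integral on the right is well defined. With the boundary contribution at $0$ eliminated, only the term at $\tau$ survives, giving $\tau\bigl(J_{\nu_2}'(\tau)J_{\nu_1}(\tau)-J_{\nu_2}(\tau)J_{\nu_1}'(\tau)\bigr)$ on the left and hence the asserted formula.
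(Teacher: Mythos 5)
Your proposal is correct and follows essentially the same route as the paper: cross-multiply the two Bessel equations, subtract to obtain the derivative of the Wronskian-type expression $s\bigl(J_{\nu_2}'J_{\nu_1}-J_{\nu_2}J_{\nu_1}'\bigr)$, and integrate from $0$ to $\tau$. The only difference is that you explicitly justify, via the series \eqref{besselseri}, that the boundary term at $s=0$ vanishes and that the integral converges there --- a point the paper's proof leaves implicit --- so your write-up is, if anything, slightly more complete.
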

      \begin{proof}
      Functions $J_{\nu_2}$ and $J_{\nu_1}$ are solutions of Bessel equations
      \begin{equation*}
 x^2J_{\nu_2}^{\prime\prime} +xJ_{\nu_2}^{\prime}+(x^2-\nu_2^2)J_{\nu_2}=0,
  \end{equation*}
   \begin{equation*}
  x^2J_{\nu_1}^{\prime\prime} +xJ_{\nu_1}^{\prime}+(x^2-\nu_1^2)J_{\nu_1}=0.
  \end{equation*}
  Multiplying the first equation by $J_{\nu_1}$  and the second one by $J_{\nu_2}$, we have
  \begin{equation*}
\frac{\nu_2^2}{x} J_{\nu_2}J_{\nu_1}=xJ_{\nu_2}^{\prime\prime}J_{\nu_1}+ J_{\nu_2} ^\prime J_{\nu_1}+x J_{\nu_2}J_{\nu_1},
 \end{equation*}
 \begin{equation*}
  \frac{\nu_1^2}{x}J_{\nu_2}J_{\nu_1}=xJ_{\nu_1}^{\prime\prime}J_{\nu_2}+ J_{\nu_1} ^\prime J_{\nu_2}+x J_{\nu_2}J_{\nu_1}.
  \end{equation*}
    Subtracting the second equality from the first one,
  \begin{equation*}
 [x(J_{\nu_2} ^\prime J_{\nu_1}-J_{\nu_1} ^\prime J_{\nu_2})]^\prime= \frac{(\nu_2^2-\nu_1^2)}{x}J_{\nu_2}J_{\nu_1}.
  \end{equation*}
  Integrating this equation from $0$ to $\tau$, leads  to the assertion.
      \end{proof}

This section is closed with some results from  the rearrangement theory  related to our optimization problems. The reader can refer to  \cite{Alvino,bu89} 
for further information about the theory of rearrangements.
\begin{defn}\label{readef}
Two Lebesgue measurable functions  $\rho: \Om \rightarrow \Bbb{R}$, $\rho_0:\Om \rightarrow \Bbb{R}$, are said to be rearrangements of each other if\\
 \begin{equation}\label{rea}
|\{x\in \Om : \rho(x)\geq \tau\}|=|\{x\in \Om : \rho_0(x)\geq \tau\}|\qquad~\quad\forall \tau \in \mathbb{R}.
\end{equation}
\end{defn}
The notation $\rho\sim \rho_0$ means that $\rho$ and $\rho_0$ are rearrangements of each other. Consider $\rho_0:\Om \rightarrow \Bbb{R}$, the class of rearrangements generated by $\rho_{0}$, denoted $\mathcal{P}$, is defined as follows\\
 \begin{equation*}
\mathcal{P}=\{\rho:\rho\sim \rho_{0}\}.
\end{equation*}

 Let $\rho_{0}= \beta\chi_{D_0}+\alpha\chi_{D_0^c}$ where $ D_0\subset \Om$ and $|D_0|=A$. For the sake of completeness, we include following
 technical assertion.

\begin{lem}\label{chirho}
  A function $\rho$ belongs to the rearrangement class $\mathcal{P}$ if and only if  $\rho= \beta\chi_{D}+\alpha\chi_{D^c}$ such that $ D\subset \Om$ and $|D|=A$.
 \end{lem}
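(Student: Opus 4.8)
The plan is to prove the two implications separately, in both cases by directly computing the distribution function $\tau \mapsto |\{x\in\Om : \rho(x)\geq\tau\}|$ and comparing it with that of $\rho_0$. Since $\rho_0=\beta\chi_{D_0}+\alpha\chi_{D_0^c}$ with $|D_0|=A$ and $0<\alpha<\beta$, its distribution function is the explicit three-level step function equal to $|\Om|$ for $\tau\leq\alpha$, equal to $A$ for $\alpha<\tau\leq\beta$, and equal to $0$ for $\tau>\beta$. Everything in the lemma is measured against this reference profile.

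For the ``if'' direction I would take $\rho=\beta\chi_{D}+\alpha\chi_{D^c}$ with $|D|=A$ and simply read off its distribution function: it equals $|\Om|$ for $\tau\leq\alpha$, equals $|D|=A$ for $\alpha<\tau\leq\beta$, and equals $0$ for $\tau>\beta$. This coincides identically with the distribution function of $\rho_0$, so \eqref{rea} holds for all $\tau\in\mathbb{R}$ and Definition~\ref{readef} gives $\rho\sim\rho_0$, i.e. $\rho\in\mathcal{P}$. This direction is purely computational.

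The substance is the ``only if'' direction. Assume $\rho\in\mathcal{P}$, so that \eqref{rea} holds with the right-hand side being the explicit step function above. First I would extract pointwise almost-everywhere bounds on $\rho$: evaluating \eqref{rea} at $\tau=\beta+\tfrac1k$ gives $|\{\rho\geq\beta+\tfrac1k\}|=0$, and taking the countable union over $k$ yields $|\{\rho>\beta\}|=0$, so $\rho\leq\beta$ a.e.; evaluating \eqref{rea} at $\tau=\alpha$ gives $|\{\rho\geq\alpha\}|=|\Om|$, so $\rho\geq\alpha$ a.e. Next I would set $D:=\{x\in\Om : \rho(x)=\beta\}$ and evaluate \eqref{rea} at $\tau=\beta$ to obtain $|D|=A$. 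To exclude intermediate values I would note that $|\{\rho\geq\tau\}|=A=|\{\rho\geq\beta\}|$ for every $\tau\in(\alpha,\beta)$, whence $|\{\tau\leq\rho<\beta\}|=0$; taking the union over $\tau=\alpha+\tfrac1k$ gives $|\{\alpha<\rho<\beta\}|=0$. Combining the three almost-everywhere statements shows $\rho\in\{\alpha,\beta\}$ a.e. with $|\{\rho=\beta\}|=A$, so $\rho=\beta\chi_D+\alpha\chi_{D^c}$ with $|D|=A$, as required.

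The only delicate point, and the step I expect to require the most care, is the passage from the one-parameter family of measure identities in \eqref{rea} (each valid for a single fixed $\tau$) to genuine almost-everywhere pointwise conclusions about $\rho$. This hinges on exploiting the monotonicity of the distribution function and on replacing the strict inequalities that define $\{\rho>\beta\}$ and $\{\alpha<\rho<\beta\}$ by countable unions over $\tau=\beta+\tfrac1k$ and $\tau=\alpha+\tfrac1k$, paying attention to whether the inequality in \eqref{rea} is attained at the endpoints $\alpha$ and $\beta$. Once these unions are handled correctly the remaining bookkeeping is routine.
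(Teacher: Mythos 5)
Your proof is correct and follows essentially the same route as the paper: both directions reduce to comparing the explicit three-level distribution function of $\rho_0$ with that of $\rho$, using continuity of measure along monotone countable families of level sets. Your version merely spells out more explicitly (via the sets $\{\rho>\beta\}$, $\{\rho<\alpha\}$, $\{\alpha<\rho<\beta\}$) the step the paper compresses into the claim that all level sets of $\rho$ and $\rho_0$ have equal measure, and like the paper it identifies $\rho$ with $\beta\chi_D+\alpha\chi_{D^c}$ up to a null set, which is the standard convention here.
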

 \begin{proof}
  Assume $\rho \in \mathcal{P}$. In view of definition \ref{readef},
  \begin{align*}
|\{x\in \Omega: \rho_0(x)= r\}|=|\cap_1 ^\infty \{x\in \Omega: r \leq \rho_0(x)< r +\dfrac{1}{n}\}|\\
=\lim_{n\rightarrow \infty}
|\{x\in \Omega:  \rho_0(x)\geq r\}|- |\{x\in \Omega:  \rho_0(x) \geq r +\dfrac{1}{n}\}|\\
=\lim_{n\rightarrow \infty}
|\{x\in \Omega:  \rho(x)\geq r\}|- |\{x\in \Omega:  \rho(x) \geq r +\dfrac{1}{n}\}|\\
=|\cap_1 ^\infty \{x\in \Omega: r \leq \rho(x)< r +\dfrac{1}{n}\}|= |\{x\in \Omega: \rho(x)= r\}|,
\end{align*}
  where it means that the
  level sets of $\rho$ and $\rho_0$  have the same measures and this yields the assertion. The other part of the theorem is concluded from
  definition \ref{readef}.
 \end{proof}

  Let us state here one of the essential tools in studying rearrangement optimization problems.

\begin{lem}\label{ber}
Let $\mathcal{P}$ be the set of rearrangements of a fixed function
$\rho_{0}\in L^r(\Omega)$, $r>1$, $\rho_{0}\not\equiv 0$, and let $q\in
L^s(\Omega)$, $s=r/(r-1)$, $q\not\equiv 0$.  If there is a decreasing
function $\eta:\mathbb{R}\rightarrow \mathbb{R}$ such that $\eta(q)\in \mathcal{P}$, then
\begin{equation*}
\int_{\Omega} \rho q dx \geq \int_{\Omega} \eta(q)q
dx~~~~~~~\qquad\qquad \forall ~\rho \in\mathcal{P},
\end{equation*}
and the function $\eta(q)$ is the unique minimizer relative to
$\mathcal{P}$.
\end{lem}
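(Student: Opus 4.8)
The plan is to prove the inequality by a layer--cake (distribution function) computation and then to extract uniqueness from its equality case. First I would normalize so that $\rho\ge 0$ and $q\ge 0$: replacing $\rho$ by $\rho+c_1$ and $q$ by $q+c_2$ changes $\int_\Omega \rho q\,dx$ by $c_2\int_\Omega\rho\,dx+c_1\int_\Omega q\,dx+c_1c_2|\Omega|$, and since every $\rho\in\mathcal{P}$ satisfies $\int_\Omega\rho\,dx=\int_\Omega\rho_0\,dx$ (and $\eta(q)\in\mathcal{P}$), both sides of the claimed inequality are shifted by the same quantity; a decreasing $\eta$ also stays decreasing after composing with these shifts. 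With $\rho,q\ge 0$ I would use Fubini and the layer--cake formula to write
\[
\int_\Omega \rho q\,dx=\int_0^\infty\!\!\int_0^\infty \big|\{\rho>s\}\cap\{q>t\}\big|\,ds\,dt .
\]

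For the inequality, the key is the elementary bound $|A\cap B|\ge (|A|+|B|-|\Omega|)_+$ with $A=\{\rho>s\}$ and $B=\{q>t\}$. Because $\rho\sim\rho_0$, the measure $|\{\rho>s\}|$ equals $|\{\rho_0>s\}|$ for every member of $\mathcal{P}$, so
\[
\int_\Omega \rho q\,dx \ \ge\ \int_0^\infty\!\!\int_0^\infty \big(|\{\rho_0>s\}|+|\{q>t\}|-|\Omega|\big)_+\,ds\,dt =: c,
\]
a constant that does not depend on the choice of $\rho\in\mathcal{P}$. It then remains to check that $\rho=\eta(q)$ attains $c$. Since $\eta$ is decreasing, each superlevel set $\{\eta(q)>s\}$ is, up to the $q$--level set of its threshold value, a sublevel set $\{q<a_s\}$; intersecting this with $\{q>t\}$ gives the smallest possible overlap, so $|A\cap B|=(|A|+|B|-|\Omega|)_+$ holds for almost every $(s,t)$. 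Integrating yields $\int_\Omega \eta(q)q\,dx=c$, which proves the inequality.

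For uniqueness I would analyze the equality case of the same chain. If $\rho\in\mathcal{P}$ also satisfies $\int_\Omega\rho q\,dx=c$, then $|\{\rho>s\}\cap\{q>t\}|=(|\{\rho_0>s\}|+|\{q>t\}|-|\Omega|)_+$ for almost every $(s,t)$. When the positive part vanishes this forces $\{\rho>s\}$ and $\{q>t\}$ to be essentially disjoint, and otherwise it forces $\{\rho>s\}\cup\{q>t\}$ to be essentially all of $\Omega$; letting $t$ vary, these two conditions pin down $\{\rho>s\}$ as a sublevel set of $q$ for almost every $s$. Since $\{\eta(q)>s\}$ is the sublevel set of $q$ having the same measure, the two superlevel sets must coincide for a.e.\ $s$, whence $\rho=\eta(q)$ a.e.

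I expect the uniqueness step to be the main obstacle, and within it the delicate point is the behaviour on level sets $\{q=t\}$ of positive measure: on such a set $\int \rho q$ depends only on $\int\rho$, so a priori one could redistribute the values of $\rho$ there without leaving the minimum. The hypothesis that $\eta$ is a genuine decreasing \emph{function} of $q$ with $\eta(q)\in\mathcal{P}$ is exactly what removes this ambiguity, since it forces $\eta(q)$ to be constant on each flat piece of $q$ in a way compatible with the prescribed distribution of $\rho_0$. Making rigorous that minimal overlap with every superlevel set, together with the fixed distribution, determines $\rho$ uniquely even across such flat pieces is the part that requires the most care; this is Burton's theorem, and I would ultimately appeal to \cite{bu89} for the full argument.
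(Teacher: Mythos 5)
The paper gives no proof of this lemma at all --- it simply cites Burton \cite{bu89} --- so your proposal is not so much a different route as the only actual argument on the table, and it is essentially sound: the layer--cake computation combined with the elementary bound $|A\cap B|\ge(|A|+|B|-|\Omega|)_+$ is a standard proof of the ``anti-aligned'' Hardy--Littlewood inequality $\int_\Omega\rho q\,dx\ge\int_0^{|\Omega|}\rho^*(w)q_*(w)\,dw$, your verification that $\eta(q)$ attains the bound is correct, and you rightly identify the equality case as the place where uniqueness lives. Two caveats. First, your normalization step assumes you can add constants $c_1,c_2$ to make $\rho$ and $q$ nonnegative, which requires $\rho_0$ and $q$ to be essentially bounded below; a general $\rho_0\in L^r(\Omega)$ and $q\in L^s(\Omega)$ need not be, so as written this step does not cover the lemma in its stated generality (it is harmless in the paper's application, where $\rho_0\in\{\alpha,\beta\}$ and $q=|\nabla u_0|^2\ge0$, and it is repairable by truncation or by proving the rearranged Hardy--Littlewood inequality for signed functions directly). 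Second, the uniqueness step is genuinely the hard part, exactly as you say: equality forces $|\{\rho>s\}\cap\{q>t\}|$ to equal the minimal value for a.e.\ $(s,t)$, which determines the \emph{measure} of $\{\rho>s\}$ inside each level set $\{q=t_0\}$ but not a priori which subset it is; the resolution is that the competitor $\eta(q)$ is constant on such level sets, so the common minimal-overlap profile can only take the values $0$ or $|\{q=t_0\}|$ there, forcing any minimizer to be constant on flat pieces of $q$ as well. You flag this correctly and defer to \cite{bu89}, which is precisely what the paper does, so the proposal claims nothing it does not deliver; what your sketch buys over the paper's bare citation is a self-contained and checkable account of why the result is true, at the cost of the one technical normalization issue noted above.
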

\begin{proof}
 See \cite{bu89}.
\end{proof}

     \section{Refusing the conjecture}\label{main result}

$\quad$ In this section, we investigate the conjecture proposed in
\cite{Conca2} when $\Om$ is a ball in
$\mathbb{R}^n$ such that $n\geq 2$. We show that the conjecture is false not only for $n=2,3$ but also for every $n \geq 4$.
 Indeed, we will establish that a ball could not be a global
minimizer for the optimization problem \eqref{mainopt} when $\al$
and $\bt$ are close to each other (low contrast regime) and $A$ is large enough.  It should be noted
that our method is not as complicated as the approach has been
stated in \cite{Conca3} and we deny the conjecture in a simpler way.

 We hereafter regard $\Om\subset \mathbb{R} ^n$ as the unit ball centered at the origin. Assume that $\psi$
   is the eigenfunction
   corresponding to the principal eigenvalue of the Laplacian
with Dirichlet's boundary condition on
   $\Om $. Then, one can consider $\psi=\psi(r)$ as a radial function which satisfies
      \begin{equation}\label{laplacian}
       \left\{
     \begin{array}{ll}
     r^2\psi^{\prime\prime}(r)+(n-1)r\psi^\prime(r)+\lambda r^2\psi(r)=0\qquad 0<r<1 ,
       \\ \psi^\prime(0)=0 \quad \:\:\: \psi(1)=0,
            \end{array}
   \right.
      \end{equation}
     where the boundary conditions correspond to  the continuity of the gradient at the origin and  Dirichlet's condition
     on the boundary. In the next lemma, we examine the function $|\psi^\prime(r)|$.
     \begin{lem}\label{maxrho}
      Let $\psi$ be the eigenfunction of \eqref{laplacian} associated with the principal eigenvalue $\lambda$. Then, function $|\psi^\prime(r)|$   has a unique maximum point $\rho_n$ in $(0,1)$.
     \end{lem}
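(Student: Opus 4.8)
The plan is to work directly with the radial equation \eqref{laplacian}, which I first rewrite in divergence form as $(r^{n-1}\psi')'=-\lambda\,r^{n-1}\psi$. Since $\psi$ is the principal eigenfunction it may be taken strictly positive on $[0,1)$, so the right-hand side is negative and $r^{n-1}\psi'$ is strictly decreasing; as it vanishes at $r=0$, this forces $\psi'(r)<0$ for every $r\in(0,1)$. Hence $|\psi'|=-\psi'$ keeps a fixed sign, and the assertion reduces to showing that $v:=\psi'$ attains its (negative) minimum at a single interior point, i.e.\ that $v$ has a unique critical point in $(0,1)$. (This matches the Bessel picture: with $\nu=(n-2)/2$ one has $\psi(r)=c_1 r^{-\nu}J_\nu(\sqrt{\lambda}\,r)$ and, by \eqref{besselrec2}, $\psi'(r)=-c_2 r^{-\nu}J_{\nu+1}(\sqrt{\lambda}\,r)$ with $\sqrt{\lambda}=j_{\nu,1}$; Lemma \ref{besselroot} gives $j_{\nu,1}<j_{\nu+1,1}$, so $J_{\nu+1}>0$ on the whole range, recovering $\psi'<0$.)

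First I would establish existence of an interior maximum by inspecting the endpoints of $v=\psi'$. Letting $r\to0$ in \eqref{laplacian} gives $\psi''(0)=-\lambda\psi(0)/n<0$, so $v$ is decreasing near the origin; evaluating \eqref{laplacian} at $r=1$ with $\psi(1)=0$ gives $\psi''(1)=-(n-1)\psi'(1)>0$, so $v$ is increasing near $r=1$. Therefore the minimum of $v$ over $[0,1]$ is strictly below both $v(1)$ and $v(0)=0$, hence is attained at some interior $\rho_n\in(0,1)$, which is then a critical point of $v$.

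The core is uniqueness, for which I differentiate \eqref{laplacian} once. Writing it as $\psi''+\tfrac{n-1}{r}\psi'+\lambda\psi=0$ and differentiating gives
\begin{equation*}
v''+\frac{n-1}{r}\,v'+\Big(\lambda-\frac{n-1}{r^{2}}\Big)v=0 .
\end{equation*}
At any critical point $r^{*}$ of $v$ this yields $v''(r^{*})=\big(\tfrac{n-1}{(r^{*})^{2}}-\lambda\big)v(r^{*})$, and since $v(r^{*})<0$ the sign of $v''(r^{*})$ equals the sign of $\lambda-\tfrac{n-1}{(r^{*})^{2}}$. Setting $r_0:=\sqrt{(n-1)/\lambda}$, a critical point is a strict local minimum of $v$ when $r^{*}>r_0$ and a strict local maximum when $r^{*}<r_0$. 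I would then order the critical points: the smallest one, $r_1$, has $v'<0$ on $(0,r_1)$, so it cannot be a local maximum and must satisfy $v''(r_1)\ge0$, giving $r_1\ge r_0$; any further critical point $r_2>r_1$ would follow an interval on which $v$ is increasing, hence be a local maximum with $v''(r_2)\le0$, forcing $r_2\le r_0$. Since $r_2>r_1\ge r_0$ this is impossible, so there is exactly one critical point, which is the sought maximum of $|\psi'|$.

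The delicate point — the main obstacle — is the degenerate case $r^{*}=r_0$, where the first-order test only gives $v''(r^{*})=0$ and the strictness of the ordering could fail. I would close this by differentiating the equation for $v$ once more: at a point where $v'=v''=0$ one finds $v'''(r_0)=-\tfrac{2(n-1)}{r_0^{3}}v(r_0)>0$, so locally $v'(r)=\tfrac12 v'''(r_0)(r-r_0)^{2}+o\big((r-r_0)^{2}\big)\ge0$ on both sides of $r_0$. This is incompatible with $v'=\psi''<0$ immediately to the left of the first critical point, so $r_1\ne r_0$; hence $r_1>r_0$ with $v''(r_1)>0$, and all the inequalities above are strict. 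This rules out a second critical point and identifies $\rho_n=r_1\in(0,1)$ as the unique maximum point of $|\psi'(r)|$.
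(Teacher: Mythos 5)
Your proof is correct, but it follows a genuinely different route from the paper's. The paper solves \eqref{laplacian} explicitly, writing $\psi(r)=r^{1-\frac{n}{2}}J_{\frac{n}{2}-1}(\mu r)$ with $\mu=j_{\frac{n}{2}-1,1}$, uses \eqref{besselrec2} and Lemma \ref{besselroot} to express $|\psi'|$ as a positive multiple of $J_{\frac{n}{2}}(t)/t^{\frac{n}{2}-1}$, and reduces the critical-point equation to the fixed-point problem $t=(n-1)J_{\frac{n}{2}}(t)/J_{\frac{n}{2}-1}(t)$, whose analysis rests on the cross-product integral identity of Lemma \ref{goodrel}. You never leave the ODE: the divergence form gives $\psi'<0$, differentiating \eqref{laplacian} gives the equation for $v=\psi'$, the sign of $v''$ at a critical point is read off from the sign of $\lambda-(n-1)/r^{2}$, and the critical points are ordered against the threshold $r_0=\sqrt{(n-1)/\lambda}$, with the degenerate case $v'=v''=0$ excluded by one further differentiation. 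Your approach buys independence from the special-function machinery (neither Lemma \ref{goodrel} nor the recurrences are needed) and arguably a tighter uniqueness step: the paper's inference that an increasing $g$ with $g(0^+)=0$ and $g(\mu^-)=\infty$ has a \emph{unique} fixed point only yields existence as literally stated, whereas your second-derivative dichotomy settles uniqueness directly. The paper's computation buys the explicit characterization of $\rho_n$ as the root of $tJ_{\frac{n}{2}-1}(t)=(n-1)J_{\frac{n}{2}}(t)$. The only addition I would make to your write-up is a one-line remark that $\psi''$ is real-analytic and not identically zero on $(0,1)$, so its zeros are isolated and the phrases ``smallest critical point'' and ``next critical point'' are well defined.
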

     \begin{proof}
      The solution of \eqref{laplacian} is
      \begin{equation*}
      \psi(r)=r^{1-\frac{n}{2}}J_{\frac{n}{2}-1}(\mu r)\qquad 0\leq r \leq 1,
      \end{equation*}
      where $\mu= j_{\frac{n}{2}-1,1}$. For the reader's convenience, we use the change of variable $t=\mu r$ and then

      \begin{equation*}
       \psi(t)=\mu^{\frac{n}{2}-1}\left(\frac{J_{\frac{n}{2}-1}(t)}{t^{\frac{n}{2}-1}}\right)\qquad 0\leq t \leq \mu.
          \end{equation*}
            According to lemma \ref{besselroot}, $j_{\frac{n}{2}-1,1}<j_{\frac{n}{2},1}$ and then we see $J_\frac{n}{2}(t)\geq 0$ for $0\leq t \leq \mu$. Therefore,
      \begin{equation*}
      |\psi^\prime(t)|=\mu^{\frac{n}{2}-1}\left(\frac{J_{\frac{n}{2}}(t)}{t^{\frac{n}{2}-1}}\right)\qquad 0\leq t \leq \mu,
      \end{equation*}
       invoking formula \eqref{besselrec2}.  To determine the maximum point of this function, one should calculate $\frac{d}{dt} (|\psi^\prime(t)|)$. Employing relations \eqref{besselrec1} and  \eqref{besselrec2},
       \begin{equation*}
      \frac{d}{dt} (|\psi^\prime(t)|)=\frac{\mu^{\frac{n}{2}-1}(t J_{\frac{n}{2}-1}(t)-(n-1)J_{\frac{n}{2}}(t))}{t^{\frac{n}{2}}}.
      \end{equation*}
      Then $\frac{d}{dt} (|\psi^\prime(t)|)=0$ yields
       \begin{equation*}
      t J_{\frac{n}{2}-1}(t)-(n-1)J_{\frac{n}{2}}(t)=0.
      \end{equation*}
      The zeros of the last equation are the fixed points of the function
       \begin{equation*}
     g(t)=(n-1) \frac{J_{\frac{n}{2}}(t)}{J_{\frac{n}{2}-1}(t)}   \qquad 0< t < \mu.
      \end{equation*}
      We find that
      \begin{equation*}
       J^\prime_{\frac{n}{2}}(t)J_{\frac{n}{2}-1}(t)-J_{\frac{n}{2}}(t)J^\prime_{\frac{n}{2}-1}(t) =\frac{(n-1)}{t} \int_0^t \frac{J_{\frac{n}{2}}(\tau)J_{\frac{n}{2}-1}(\tau)}{\tau}d\tau,
      \end{equation*}
      applying lemma \ref{goodrel}. Consequently, $g^\prime(t)>0$ for $0<t<\mu$ and $g$ is an increasing function. On the other hand,
      $g(t)$ tends to infinity when $t\rightarrow \mu$ and, in view of formula \eqref{besselseri},  it tends to zero when $t\rightarrow 0$.
      Thus, $g(t)$ has a unique fixed point $\rho_n$ in $(0,\mu)$ which it is the unique extremum point of $|\psi^\prime(t)|$. Recall
      that  $t J_{\frac{n}{2}-1}(t)-(n-1)J_{\frac{n}{2}}(t)$ is negative when $t\rightarrow \mu$. Hence, $\frac{d}{dt} (|\psi^\prime(t)|)$ is negative
      in a neighborhood of $\mu$ and thus, $\rho_n$ is the unique maximum point of $\frac{d}{dt} (|\psi^\prime(t)|)$ in $(0,\mu)$.
     \end{proof}

We need the following theorem to deduce the main result.
\begin{thm}\label{lurianthem}
Assume  $D_0$ is a  subset of $\Om$ where $|D_0|=A$ and $u_0$ is the eigenfunction of \eqref{mainpde} corresponding to $\la(D_0)$. Let $D_1$
be a subset of $\Om$ where
\begin{equation}\label{tformulap}
|D_1|=A \,\:\mathrm{and}\,\:D_1=\{x:\,\:|\nabla u_0|\leq t\}
\end{equation}
with
\begin{equation}\label{tformula}
t=\inf\{s\in \mathbb{R} : |\{x:\,\:|\nabla u_0|\leq s\}|\geq A\}.
\end{equation}
Then, $\la(D_1)\leq \la(D_0)$.
\end{thm}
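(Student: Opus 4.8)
The plan is to read this as a rearrangement minimization of the energy functional and to feed it into Lemma \ref{ber} with the weight $q=|\nabla u_0|^2$. First I would normalize the eigenfunction so that $\|u_0\|_{\lt}=1$ and abbreviate the two conductivity profiles by $\rho_0=\bt\chi_{D_0}+\al\chi_{D_0^c}$ and $\rho_1=\bt\chi_{D_1}+\al\chi_{D_1^c}$. Since $|D_0|=|D_1|=A$, Lemma \ref{chirho} shows that both $\rho_0$ and $\rho_1$ belong to the rearrangement class $\mathcal{P}$ generated by $\rho_0$. The point of departure is that $u_0$ is an admissible competitor in the variational formulation \eqref{lambdavar} for the configuration $D_1$, while it actually realizes the minimum for the configuration $D_0$; consequently
\begin{equation*}
\la(D_0)=\int_\Om \rho_0\,|\nabla u_0|^2\,dx,\qquad \la(D_1)\leq \int_\Om \rho_1\,|\nabla u_0|^2\,dx.
\end{equation*}
Hence the whole theorem reduces to the single inequality $\int_\Om \rho_1\,|\nabla u_0|^2\,dx\leq \int_\Om \rho_0\,|\nabla u_0|^2\,dx$.

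The heart of the matter is that $\rho_1$ is exactly the minimizer of $\rho\mapsto\int_\Om \rho\, q\,dx$ over $\mathcal{P}$, with $q=|\nabla u_0|^2$. To see this I would exhibit a decreasing function $\eta$ realizing $\eta(q)=\rho_1$: choosing $\eta(s)=\bt$ for $s\leq t^2$ and $\eta(s)=\al$ for $s>t^2$, where $t$ is the threshold \eqref{tformula}, the superlevel set $\{\,\eta(q)=\bt\,\}$ coincides with the sublevel set $\{\,|\nabla u_0|\leq t\,\}=D_1$ prescribed in \eqref{tformulap}. Since $\bt>\al$, the function $\eta$ is nonincreasing, and by construction $\eta(q)=\rho_1\in\mathcal{P}$. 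Noting that $\rho_0\in L^\infty(\Om)\subset L^r(\Om)$ for every $r$ and that, by standard elliptic regularity, $q=|\nabla u_0|^2$ lies in the dual space $L^s(\Om)$ for $s$ close to $1$, Lemma \ref{ber} applies and gives $\int_\Om \rho\, q\,dx\geq \int_\Om \rho_1\, q\,dx$ for every $\rho\in\mathcal{P}$, in particular for $\rho=\rho_0$. Chaining this with the two relations above yields $\la(D_1)\leq\int_\Om \rho_1\,|\nabla u_0|^2\,dx\leq\int_\Om \rho_0\,|\nabla u_0|^2\,dx=\la(D_0)$, which is the claim.

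The step that I expect to require the most care is verifying the hypothesis of Lemma \ref{ber}, namely that $\eta(q)$ genuinely lies in $\mathcal{P}$, equivalently that $D_1$ can be taken of measure exactly $A$. This is precisely the role of the infimum in \eqref{tformula}: it guarantees $|\{\,|\nabla u_0|<t\,\}|\leq A\leq|\{\,|\nabla u_0|\leq t\,\}|$, so a set of measure $A$ can be extracted from the sublevel set. The genuinely delicate case is when the level set $\{\,|\nabla u_0|=t\,\}$ carries positive measure: then $D_1$ must be selected as a measurable subset of $\{\,|\nabla u_0|\leq t\,\}$ containing $\{\,|\nabla u_0|<t\,\}$ and of total measure $A$, and one has to check that $\rho_1$ is still a decreasing function of $q$ up to a set on which $q$ is constant, so that both the value of $\int_\Om\rho_1 q\,dx$ and the applicability of Lemma \ref{ber} are unaffected by the particular choice. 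Once this measure-theoretic bookkeeping is settled the displayed chain of inequalities closes the proof.
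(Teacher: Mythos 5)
Your proposal is correct and follows essentially the same route as the paper: both prove the key inequality $\int_\Om \rho_1 |\nabla u_0|^2\,dx \leq \int_\Om \rho_0 |\nabla u_0|^2\,dx$ by applying Lemma \ref{ber} with $q=|\nabla u_0|^2$ and the step function $\eta(s)=\bt$ for $s\leq t^2$, $\eta(s)=\al$ for $s>t^2$, then conclude via the variational characterization \eqref{lambdavar}. The only difference is cosmetic: where you discuss the possible positive measure of the level set $\{|\nabla u_0|=t\}$, the paper dispatches this up front by arguing (via Krein--Rutman positivity and Lemma 7.7 of Gilbarg--Trudinger) that all such level sets are null, so $D_1$ is uniquely determined.
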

\begin{proof}
 It is well known, from the Krein-Rutman theorem \cite{rutman}, that $u_0$ is positive everywhere on $\Om$. Therefore,
 we infer that  all sets  $\{x:\,\:|\nabla u_0|=s\}$
 have  measure zero because of  lemma 7.7 in
\cite{gilbarg}. Then, one can determine set $D_1$ uniquely using the above formula. Let us define the  following
decreasing function
\begin{equation*}
 \eta(s)=
 \left\{
     \begin{array}{ll}
      \beta\quad\;\;\;\:\quad 0 \leq s\leq t^2,
              \\ \alpha \quad  \quad \quad\quad \quad s>t^2,
     \end{array}
   \right.
 \end{equation*}
 where  it yields
 \begin{equation*}
 \eta(|\nabla u_0|^2)=\beta \chi_{D_1}+\alpha \chi_{D_1^c}.
 \end{equation*}
 Employing lemma  \ref{chirho} and \ref{ber}, we can deduce
 \begin{equation*}
 \int (\beta\chi_{D_1}+\alpha \chi_{D_1^c}) |\nabla u_0|^2dx\leq\int (\beta\chi_{D_0}+\alpha \chi_{D_0^c}) |\nabla u_0|^2dx,
 \end{equation*}
 and then we have $\la(D_1)\leq \la(D_0)$ invoking \eqref{lambdavar}.
\end{proof}

\begin{rem}\label{levelsetofu}
In theorem \ref{lurianthem},  if $D_1\neq D_0$, then
\begin{equation*}
 \int (\beta\chi_{D_1}+\alpha \chi_{D_1^c}) |\nabla u_0|^2dx<\int (\beta\chi_{D_0}+\alpha \chi_{D_0^c}) |\nabla u_0|^2dx,
 \end{equation*}
applying the uniqueness of the minimizer in lemma \ref{ber}. Thus, we observe that $\la(D_1)< \la(D_0)$ when $D_1\neq D_0$.

\end{rem}
\begin{rem}\label{approximant}
In \cite{Conca3}, it has been proved that if ${\rho}_*=\beta\chi_{D_*}+\alpha \chi_{D_*^c}$ is the minimizer of
\begin{equation}\label{appproblem}
\underset{\rho \in\mathcal{P}}{\min} \int_{\Omega} \rho |\nabla \psi|^2 dx,
\end{equation}
then the set $D_*$ is an approximate solution for \eqref{mainopt}, under the assumption of low contrast regime. By arguments similar to
those in the proof of theorem \ref{lurianthem}, one can determine the unique minimizer of problem \eqref{appproblem}, ${\rho}_*=\beta\chi_{D_*}+\alpha \chi_{D_*^c}$,
using formulas \eqref{tformulap} and \eqref{tformula}. Recall from lemma \ref{maxrho} that $|\psi^\prime(r)|$ has a unique maximum point $\rho_n$ in $(0,1)$
and it is a continuous function on $[0,1]$ with $|\psi^\prime(0)|=0$. Then the unique symmetrical domain $D_*$ which ${\rho}_*=\beta\chi_{D_*}+\alpha \chi_{D_*^c}$ is the solution of \eqref{appproblem} is of two possible types. The set $D_*$ is a ball centered at the origin if $A\leq |\mathcal{B}(0, \rho_n)|$
and it is the union of a ball and an annulus touching the outer boundary of $\Om$ if $A> |\mathcal{B}(0, \rho_n)|$.

 This result has been established in \cite{Conca3} for $n=2,3$.
\end{rem}
 Now we are ready to state the main result. Indeed, we establish that locating the material with the highest
 conductivity in a ball centered at the origin is not the minimal distribution since we can find another radially symmetric
distribution of the materials which has a smaller basic frequency.

\begin{thm}\label{bestresult}
Let
$D_0=\mathcal{B}(0,\rho)\subset \Om$ be a ball centered at the origin with
$|D_0|=A$. If  $\beta$ is sufficiently close to $\alpha$ and $\rho> \rho_n$, then there is a set $D_1\subset \Om$ with
$|D_1|=A$  containing a radially symmetric subset of
$D_0^c$ where $\la(D_1)<\la(D_0)$.
\end{thm}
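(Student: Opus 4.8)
The plan is to feed the given ball $D_0=\mathcal{B}(0,\rho)$ and its principal eigenfunction $u_0$ directly into Theorem \ref{lurianthem}: this produces the sublevel set $D_1=\{x:\,|\nabla u_0|\le t\}$ of measure $A$ determined by \eqref{tformulap}--\eqref{tformula}, and already gives $\lambda(D_1)\le\lambda(D_0)$. Because $u_0$ is radial, $|\nabla u_0|$ is radial and hence so is $D_1$; since $|D_1|=|D_0|=A$, the moment we know $D_1\neq D_0$ the difference $D_1\setminus D_0$ is a radially symmetric subset of $D_0^c$ of positive measure, and Remark \ref{levelsetofu} upgrades the conclusion to the strict inequality $\lambda(D_1)<\lambda(D_0)$. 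Thus the entire theorem reduces to the single assertion that $D_1\neq D_0$.

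To analyze $u_0$ I would use that $D_0$ is a centered ball, so by uniqueness of the principal eigenpair $u_0=u_0(r)$ is radial and $|\nabla u_0(x)|=|u_0'(|x|)|$. The hypothesis that $\beta$ is close to $\alpha$ enters through a continuity argument: as $\beta\to\alpha$ the operator in \eqref{mainpde} tends to $-\alpha\Delta$, whose principal eigenfunction is exactly the function $\psi$ of \eqref{laplacian}, so the normalized positive eigenfunction $u_0$ converges to $\psi$. In the radial variable this is continuous dependence of a Sturm--Liouville problem on the parameter $\beta$, subject to the transmission condition that $u_0$ and $(\beta\chi_{D_0}+\alpha\chi_{D_0^c})\,u_0'$ stay continuous across $r=\rho$; in particular it yields the pointwise convergence $u_0'(r)\to\psi'(r)$ at each fixed interior $r$.

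Now I would exploit the non-monotonicity of $|\psi'|$ furnished by Lemma \ref{maxrho}. Since $\rho>\rho_n$ and $|\psi'|$ is strictly decreasing on $(\rho_n,1)$, I can fix a point $r_*\in(\rho,1)$ with $|\psi'(r_*)|<|\psi'(\rho_n)|$, and by the convergence above $|u_0'(r_*)|<|u_0'(\rho_n)|$ once $\beta$ is sufficiently close to $\alpha$. Suppose, toward a contradiction, that $D_1=D_0=\mathcal{B}(0,\rho)$. Reading off \eqref{tformulap}, $|u_0'(r)|\le t$ for a.e.\ $r<\rho$ while $|u_0'(r)|>t$ for a.e.\ $r>\rho$; as $u_0$ is smooth on each of $(0,\rho)$ and $(\rho,1)$ and $\rho_n<\rho<r_*$, continuity forces $|u_0'(\rho_n)|\le t\le|u_0'(r_*)|$, hence $|u_0'(\rho_n)|\le|u_0'(r_*)|$. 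This contradicts the strict inequality just established, so $D_1\neq D_0$ and the proof is complete.

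The reduction and the final contradiction are routine; the one delicate point is the passage $\beta\to\alpha$. The hard part will be making the convergence $u_0'\to\psi'$ at the fixed interior points $\rho_n$ and $r_*$ fully rigorous: one must control the eigenfunction together with its first derivative across the interface $r=\rho$ uniformly as $\beta\to\alpha$. I expect this to follow from the explicit piecewise representation of $u_0$ in Bessel functions on $[0,\rho]$ and $[\rho,1]$, combined with continuous dependence of $\lambda(D_0)$ and of the two matching constants on $\beta$; this is the step that must be carried out with care rather than merely sketched.
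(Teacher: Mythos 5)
Your proposal follows essentially the same route as the paper: reduce to showing $D_1\neq D_0$ via Theorem \ref{lurianthem} and Remark \ref{levelsetofu}, then use the perturbation $u_0'\to\psi'$ as $\beta\to\alpha$ together with Lemma \ref{maxrho} to exhibit a point of $D_0^c$ where $|\nabla u_0|$ is smaller than at a point of $D_0$, forcing the sublevel set $D_1$ to spill into $D_0^c$. The only cosmetic difference is that you compare $|u_0'(\rho_n)|$ with $|u_0'(r_*)|$ at an interior point $r_*\in(\rho,1)$, whereas the paper compares $|y_1'(\rho)|$ with $|y_2'(1)|$; both rest on the same monotonicity of $|\psi'|$ past its maximum, and the convergence step you rightly flag as delicate is the same one the paper disposes of by citing Rellich's perturbation theory.
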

\begin{proof}
Suppose $u_0$ is the eigenfunction of \eqref{mainpde} associated with  $\la=\la(D_0)$ such that $\|u_0\|_{\lt}=1$. Utilizing theorem \ref{lurianthem} and remark \ref{levelsetofu}, we conclude
$\la(D_1)<\la(D_0)$ provided
\begin{equation*}
D_1=\{x:\,\,|\nabla u_0|\leq
 t\}, \quad t=\inf\{s\in \mathbb{R} : |\{x:\,\:|\nabla u_0|\leq s\}|\geq
 A\},
\end{equation*}
and $D_0\neq D_1$. One can
 observe that $u_0$  satisfies the following transmission problem
\begin{align} \label{traneq}
 \left \{
     \begin{array}{ll}
       -\bt \Delta v_1 = \la v_1\quad\quad \mathrm{in}\quad D_0
       \\ -\al \Delta v_2 = \la v_2\quad\quad \mathrm{in}\quad       D_0^c
       \\v_1(x)=v_2(x)\;\;\;\, \quad\mathrm{on}\;\;\; \,\partial D_0
       \\ \bt\frac{\partial}{\partial \mathfrak{n}} v_1=\al\frac{\partial}{\partial \mathfrak{n}}
       v_2\quad\: \mathrm{on}\quad \:\partial D_0
       \\v_2(x)=0\quad\:\:\:\qquad \mathrm{on}\quad       \partial \Om,
     \end{array}
   \right.
\end{align}
where $\mathfrak{n}$ is the unit outward normal. According to the
above representation, $u_0$ is an analytic function in the closure
of sets $D_0$ and $D_0^c$ employing the analyticity theorem
\cite{john}.

  We should  assert
that $D_0\neq D_1$. To this end, let us note that $u_0$ is a radial
function and so $u_0(x)=y(r)$, $r=\|x\|,$ where the function $y$
solves
\begin{equation} \label{radialeq}
 \left \{
     \begin{array}{ll}
       y^{\prime\prime}(r)+\frac{n-1}{r}y^{\prime}(r)+\frac{\la}{\bt} y(r)
       =0 \quad \mathrm{in}\;\;(0,\rho)
       \\y^{\prime\prime}(r)+\frac{n-1}{r}y^{\prime}(r)+\frac{\la}{\al} y(r)
       =0 \quad \mathrm{in}\;\;(\rho,1)
       \\y(\rho^-)=y(\rho^+)
       \\ \bt y^{\prime}(\rho^-)=\al
       y^{\prime}(\rho^+)
       \\y^{\prime}(0)=0,\:\:y(1)=0.
            \end{array}
   \right.
\end{equation}

We   introduce $y_1(r)$ and $y_2(r)$ as the solution of
\eqref{radialeq} in $[0, \rho]$ and $[\rho, 1]$ respectively. We claim
that if

\begin{equation}\label{maininequality}
|y_2^{\prime}(1)|< z=\underset{r\in[0,\rho]}{\max} |y_1^{\prime}(r)|,
\end{equation}
then $D_1$ contains a radially symmetric subset of
$D_0^c$ and so $D_1$ is not equal to $D_0$.

  Recall that
level sets of $|\nabla u_0|$ have  measure zero.  Hence,  if $|y_2^{\prime}(r)|>z$ for all $r$ in $[\rho,1]$
then $D_1=\{x:\; |\nabla u_0|\leq t\}=D_0$ with $t=z$.  On the
other hand, if $|y_2^{\prime}(1)|<z$ then we have $t<z$ to satisfy
the condition $|D_1|=A$, in view of the continuity of the function $|y_2^{\prime}(r)|$  . In other words, $D_1$
 should include a radially symmetric subset of
$D_0^c$. This discussion proves our claim.

 It remains to verify inequality \eqref{maininequality}. This is a standard result of the perturbation theory
 of eigenvalues that $u_0$ tends to $\psi$ with $\|\psi\|_{\lt}=1$ and $\lambda$ converges to $\al \mu$ when $\beta$
 decreases to $\alpha$ \cite{rellich}. The convergence of the eigenfunctions holds in the space $H^1_0(\Om)$. Hence it yields that
 $y(r)$ and $y^\prime(r)$ converge to $\psi(r)$ and $\psi^\prime(r)$ almost everywhere in $\Om$, respectively. Since
 $y^\prime(r)$ and $\psi^\prime(r)$ are continuous functions on the sets $[0,\rho]$ and $[\rho,1]$, the convergence
 is pointwise\cite{lieb}. In summary, $|y_1^\prime(r)|$ converges to $|\psi^\prime(r)|$ pointwise for all $r$ in $[0, \rho]$
 and $|y_2^\prime(r)|$ converges to $|\psi^\prime(r)|$ pointwise in $[\rho, 1]$. Additionally, $\left||y_2^\prime(\rho)|-|y_1^\prime(\rho)|\right|$
  converges to zero when $\beta$ approaches $\alpha$. Invoking lemma \ref{maxrho}, we see that $|\psi^\prime(\rho)|-|\psi^\prime(1)|=d_n>0$
 when $\rho>\rho_n$. Thus, if $\beta$ is close to $\alpha$ enough, we have
 \begin{equation}\label{lastrel1}
 \left||y_2^\prime(\rho)|-|y_2^\prime(1)|\right|> {d_n}/2,
 \end{equation}
 and also
 \begin{equation}\label{lastrel2}
 |y_2^\prime(\rho)|\rightarrow |\psi^\prime(\rho)|,\quad  |y_2^\prime(1)|\rightarrow |\psi^\prime(1)|,\quad |y_2^\prime(\rho)|\rightarrow |y_1^\prime(\rho)|,
 \end{equation}
  as $\beta$ converges to $\alpha$. Applying \eqref{lastrel1} and \eqref{lastrel2}, leads us to  inequalities
  \begin{equation*}
  |y_2^\prime(1)|<|y_1^\prime(\rho)| \leq z.
  \end{equation*}

 \end{proof}




\end{document}